\numberwithin{equation}{section}
\theoremstyle{plain}
\newtheorem{thm}{Theorem}[section]
\newtheorem{theorem}[thm]{Theorem}
\newtheorem{lem}[thm]{Lemma}
\newtheorem{prop}[thm]{Proposition}
\newtheorem{cor}[thm]{Corollary}
\newtheorem{cory}[thm]{Corollary}
\newcommand\theoref{Theorem~\ref}
\newcommand\propref{Proposition~\ref}
\newcommand\corref{Corollary~\ref}
 \theoremstyle{remark}
 \theoremstyle{definition}
 \newtheorem{remarks}[thm]{Remarks}
 \def\ber{\mathfrak b}
 \DeclareMathOperator{\cat}{{\mbox{\rm cat}}}
 \def\cd{\protect\operatorname{cd}}
 \def\Hom{\protect\operatorname{Hom}}
 \def\Im{\protect\operatorname{Im}}
 \def\im{\protect\operatorname{Im}}
 \def\Ker{\protect\operatorname{Ker}}
 \def\cd{\protect\operatorname{cd}}
\newcommand \pa[2]{\frac{\partial #1}{\partial #2}}
\def\eps{\varepsilon}
\def\gf{\varphi}
\def\ga{\alpha}
\def\gb{\beta}
\def\scr{\mathcal}
\def\A{{\scr A}}
\def\Z{{\mathbb Z}}
\def\ra{{\R A}}
\def\zp{\Z[\pi]}
\def\ip{I(\pi)}
\def\pa{\partial}
\def\p{\medskip{\parindent 0pt \it Proof.\ }}
\def\wt{\widetilde}
\def\m{\medskip}
\def\nm{\noindent\medskip}
\def\ov{\overline}
\def\la{\langle}
\def\ra{\rangle}
\def\lr{\longrightarrow}
\long\def\forget#1\forgotten{} %
\date{\today}
\begin{document}
\title[Berstein-Svarc Theorem in dimension 2]
{On the Berstein-Svarc Theorem in dimension 2}

\author[A.~Dranishnikov]{Alexander N. Dranishnikov$^{1}$} %
\thanks{$^{1}$Supported by NSF, grant DMS-0604494}
\author[Yu.~Rudyak]{Yuli B. Rudyak$^{2}$}%
\thanks{$^{2}$Supported by NSF, grant DMS-0406311}
\address{Alexander N. Dranishnikov, Department of Mathematics, University
of Florida,  Little Hall, Gainesville, FL 1-10, USA}
\email{dranish@math.ufl.edu}
\address{Yuli B. Rudyak, Department of Mathematics, University
of Florida,  Little Hall, Gainesville, FL 1-10, USA}
\email{rudyak@math.ufl.edu}

\begin{abstract}
We prove that for any group $\pi$ with $\cd\pi=n$ the $n$th power of
the Berstein class is nontrivial, $\ber ^n\ne 0$.  This allows us to
prove the following Berstein-Svarc theorem for all $n$:

\noindent {\bf Theorem.} {\em For a connected complex $X$ with $\dim
X=\cat X=n$, $\ber_X^n\ne 0$ where $\ber_X$ is the Berstein class of
$X$}.

\smallskip

\nm Previously it was known for $n\ge 3$ ~\cite{Sv, Ber}.

\nm We also prove that, for every map $f: M \to N$ of degree $\pm 1$ of closed orientable manifolds, the fundamental group of $N$
is free provided that the fundamental group of $M$ is.
\end{abstract}

\maketitle

\tableofcontents

\section{Introduction}

We follow the normalization of the Lusternik-Schnirelmann category
(LS category) used in the recent monograph \cite{CLOT}, i.e. the
category of the point is equal to zero. The category of a space $X$
is denoted by $\cat X$.

\m Given a group $\pi$ we denote by $\zp$ the group ring of $\pi$ and by $\ip$
the augmentation ideal of $\pi$, i.e. the kernel of the augmentation map $\eps:
\zp \to \Z$. The cohomological dimension of $\pi$ is denoted by $\cd \pi$.

\m Consider a pointed connected $CW$-space $(X, x_0)$. Let $p: \wt  X \to X$ be
the universal covering and put $\wt X_0=p^{-1}(x_0)$. Put $\pi=\pi_1(X,x_0)$ and
let $I(\pi)$ denote the augmentation ideal of $\zp$. The exactness of the
sequence
$$
0\to H_1(\wt X, \wt X_0) \to H_0(\wt X_0) \to H_0(\wt X)
$$
yields an isomorphism $H_1(\wt X, \wt X_0)\cong I(\pi)$. So, we get
isomorphisms
$$
H^1(X,x_0;I(\pi))\cong \Hom_{\zp}(H_1(\wt X,\wt
X_0),I(\pi))\cong\Hom_{\zp}(I(\pi),I(\pi)).
$$
We denote by $\ov \ber\in H^1(X,x_0;I(\pi))$ the element that
corresponds to the identity $1_{I(\pi)}$ in the  right hand side.
Finally, we set
\begin{equation}\label{eq:bersteinclass}
\ber=\ber_X=j^*\ov \ber\in H^1(X;I(\pi))
\end{equation}
where $j:X=(X,\emptyset)\to (X,x_0)$ is the inclusion of pairs.
Consider the cup power $\ber^{n}\in H^n(X;I(\pi)\otimes\cdots
\otimes I(\pi))$ of $\ber$. The following theorem was proved by I.
Berstein~\cite[Theorem A]{Ber} and A. Svarc~\cite{Sv}, see
also~\cite[Proposition 2.1]{CLOT}

\begin{theorem}
\label{t:berstein} If $\dim X=\cat X=n\ge 3$, then $\ber^{n}\ne 0$.
{\rm (}For the case ~$n=\infty$ this means that~$\ber^{k}\ne 0$ for
all~$k$.{\rm )}
\end{theorem}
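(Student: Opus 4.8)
The plan is to establish the sharper equivalence $\cat X\le n-1\iff\ber^n=0$ under the standing hypothesis $\dim X=n$; since $\cat X\le\dim X=n$ holds automatically for a connected $CW$-complex, this at once yields the theorem. I first note that the hypothesis is only consistent with $\pi:=\pi_1(X)\ne1$: a simply connected complex satisfies $\cat X\le(\dim X)/2$, so $\cat X=\dim X=n\ge2$ forces $\pi\ne1$ and hence $\ip\ne0$, which is what makes $\ber$ a potentially nonzero class. The engine of the proof is Ganea's characterization of category: $\cat X\le n-1$ if and only if the $(n-1)$st Ganea fibration $p_{n-1}\colon G_{n-1}X\to X$ admits a section.

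The fiber $F_{n-1}$ of $p_{n-1}$ is fiber-homotopy equivalent to the $n$-fold join $\Omega X*\dots*\Omega X$. Since $\pi_0(\Omega X)=\pi$, this join is $(n-2)$-connected and the standard computation of the homology of a join gives $\wt H_{n-1}(F_{n-1})\cong\ip^{\otimes n}$ as a $\zp$-module, with no other contributions in that degree. This is exactly where $n\ge3$ is essential: for $n\ge3$ the fiber is simply connected, the Hurewicz isomorphism yields $\pi_{n-1}(F_{n-1})\cong\ip^{\otimes n}$, and primary obstruction theory applies with ordinary twisted coefficients. A section of $p_{n-1}$ then exists automatically over the $(n-1)$-skeleton, the obstruction to extending it over $X^{(n)}=X$ is a single class $\theta\in H^n(X;\ip^{\otimes n})$, and because $\dim X=n$ there are no higher obstructions; thus a section exists if and only if $\theta=0$.

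The heart of the argument, and the step I expect to be the main obstacle, is the identification $\theta=\ber^n$. I would model $p_{n-1}$ as the $n$-fold fiberwise join of the path fibration $p_0\colon PX\to X$ and invoke the multiplicativity of the primary obstruction under fiberwise joins: the obstruction to a section of a fiberwise join is the cup product of the obstructions of the factors, under the coefficient pairing $H^1(X;\ip)^{\otimes n}\to H^n(X;\ip^{\otimes n})$. The primary obstruction to a section of $p_0$ lies in $H^1(X;\ip)$ and, by the very definition of the Berstein class in \eqref{eq:bersteinclass}, equals $\ber$. Hence $\theta=\ber\cup\dots\cup\ber=\ber^n$, which completes the equivalence and therefore the theorem. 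The delicate points are the compatibility of the fiberwise-join model of $p_{n-1}$ with this cup-product pairing and the careful bookkeeping of the $\zp$-twisting; the same connectivity and Hurewicz inputs break down for $n\le2$, which is precisely why that range requires the separate argument of this paper. For $n=\infty$ the statement $\ber^k\ne0$ for all $k$ is obtained by applying the same obstruction-theoretic reduction in each finite degree $k$.
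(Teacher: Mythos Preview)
The paper does not give its own proof of \theoref{t:berstein}; it is quoted as the known result of Berstein and \v{S}varc, and the original argument is described in a single sentence as ``topological \ldots\ based on the obstruction theory and the Whitehead approach to the Lusternik--Schnirelmann category.''  Your proposal is a correct outline of precisely that classical argument, phrased via the equivalent Ganea (fiberwise-join) model rather than Whitehead's fat wedge: the $n$-fold fiberwise join of the path fibration has fiber $(\Omega X)^{*n}$, which is $(n-2)$-connected with $\pi_{n-1}\cong\ip^{\otimes n}$, and the identification of its primary obstruction with $\ber^n$ is exactly \v{S}varc's computation.  So there is nothing to compare against here.

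The paper's own work lies elsewhere: the algebraic \theoref{t:power} for $X=K(\pi,1)$, which combined with Stallings--Swan yields the case $n=2$ in \theoref{t:main}.  That is precisely the range where, as you correctly note, the fiber $(\Omega X)^{*2}$ fails to be simply connected and your obstruction-theoretic reduction breaks down.  One small caveat in your sketch: the ``primary obstruction to a section of $p_0$'' is not literally an element of $H^1(X;\ip)$ in the usual sense, since the fiber $\Omega X$ is not connected; the clean statement is that the primary obstruction of the $n$-fold join $p_0^{*n}$ (whose fiber \emph{is} simply connected for $n\ge 3$) equals $\ber^n$, and this is how \v{S}varc and Berstein actually formulate it.
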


\m The proof in~\cite{Ber, Sv} is topological. It is based on the
obstruction theory and the Whitehead approach to the
Lusternik-Schnirelmann category. Thus, it cannot be extended to
$n=2$. In this paper we give an algebraic prove of the theorem for
all $n$ when $X=K(\pi,1)$. In this case the condition $\dim X=\cat
X=n$ can be replaced by a formally weaker condition $\cd\pi=n$.
Though we note that the equality $\cat K(\pi,1)=\cd\pi$ holds for
all $n$. Then using the Stallings-Swan theorem we apply our result
to extend the Berstein-Svarc theorem to the case $n=2$.

For $n=1$ the Theorem holds for trivial reasons.

\section{The Berstein class}

Here we present alternative constructions of the Berstein class and
prove its universal properties.
\m We regard $X$ as a $CW$-space with one vertex and consider the
chain complex
\[
\CD
\ldots @>>> C_1(\wt X) @>\pa_1 >> C_0(\wt X)@>>> \Z @>>> 0
\endCD
\]
of the universal covering space $\wt X$. This complex is a free
resolution of the $\zp$-module $Z$ with $C_0(X)\cong \zp$. Then $\Im
\pa_1=\ip$.

\begin{prop}\label{p:first}
The class $\ber_X$ is the cohomology class of the cocycle $f: C_1\to
\ip,\, f(c)=\pa_1(c)$.
\end{prop}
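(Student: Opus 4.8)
The plan is to unwind every isomorphism appearing in the definition of $\ber_X$ at the cochain level and check that the cocycle one lands on is exactly $f$. Since $X$ has a single vertex $x_0$, the set $\wt X_0$ consists precisely of the $0$-cells of $\wt X$, so $C_0(\wt X,\wt X_0)=0$ while $C_q(\wt X,\wt X_0)=C_q(\wt X)$ for $q\ge 1$. Consequently the relative cochain complex $\Hom_{\zp}(C_*(\wt X,\wt X_0),\ip)$ vanishes in degree $0$, and $H^1(X,x_0;\ip)$ is literally the kernel of the coboundary $\delta^1(\phi)=\phi\circ\pa_2$, i.e. the group of $\zp$-homomorphisms $\phi:C_1(\wt X)\to\ip$ with $\phi\pa_2=0$. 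Each such $\phi$ factors uniquely through $C_1(\wt X)/\Im\pa_2=H_1(\wt X,\wt X_0)$, and this factorization is exactly the isomorphism $H^1(X,x_0;\ip)\cong\Hom_{\zp}(H_1(\wt X,\wt X_0),\ip)$ used in the definition.

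First I would make the isomorphism $\alpha:H_1(\wt X,\wt X_0)\xrightarrow{\cong}\ip$ explicit. It is the connecting homomorphism of the pair, induced by $\pa_1$: a relative cycle is represented by an arbitrary chain $c\in C_1(\wt X)$ (the relative boundary being zero), and $\alpha[c]=\pa_1(c)$. This is well defined because $\pa_1\pa_2=0$, injective because $\wt X$ is simply connected so $\ker\pa_1=\Im\pa_2$, and surjective onto $\Im\pa_1=\ip$. Under the composite isomorphism $H^1(X,x_0;\ip)\cong\Hom_{\zp}(\ip,\ip)$, a cocycle $\phi$ is therefore sent to $\bar\phi\circ\alpha^{-1}$, where $\bar\phi$ denotes the map obtained from $\phi$ by passing to the quotient.

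Next I would chase the identity backwards. The class $\ov\ber$ is the cocycle $\phi$ whose image is $1_{\ip}$, i.e. $\bar\phi\circ\alpha^{-1}=1_{\ip}$, equivalently $\bar\phi=\alpha$. Since $\bar\phi[c]=\phi(c)$ and $\alpha[c]=\pa_1(c)$, this forces $\phi(c)=\pa_1(c)$ for every $c$, so $\ov\ber$ is represented by the relative cocycle $c\mapsto\pa_1(c)$. Finally, on $1$-cochains the map $j^*$ is the natural map $\Hom_{\zp}(C_1(\wt X,\wt X_0),\ip)\to\Hom_{\zp}(C_1(\wt X),\ip)$, which is the identity because $C_1(\wt X,\wt X_0)=C_1(\wt X)$; hence $\ber_X=j^*\ov\ber$ is represented by the same formula $c\mapsto\pa_1(c)$, which is $f$.

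The argument thus reduces entirely to bookkeeping of these identifications. The only point requiring genuine care, and the step I expect to be the main obstacle, is verifying that the abstract connecting isomorphism $\alpha$ coincides on the nose with the map induced by $\pa_1$ — in particular that no sign or orientation convention intervenes between the long exact sequence of the pair and the explicit formula $\alpha[c]=\pa_1(c)$ — since every other step is purely formal.
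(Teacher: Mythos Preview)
Your proof is correct and follows essentially the same route as the paper: both arguments exploit that $C_0(\wt X,\wt X_0)=0$ and $C_1(\wt X,\wt X_0)=C_1(\wt X)$ to identify $H_1(\wt X,\wt X_0)$ with $C_1(\wt X)/\Im\pa_2\cong\ip$ via $\pa_1$, and then observe that the cocycle corresponding to $1_{\ip}$ is the quotient map, i.e.\ $c\mapsto\pa_1(c)$. Your version is somewhat more explicit about the bookkeeping (and your worry about signs is harmless here, since the connecting map is defined precisely by $\pa_1$), but the content is the same.
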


\p Consider the diagram
\[
\CD
C_2(\wt X) @>\pa_2>>C_1(\wt X) @>\pa_1 >> C_0(\wt X) @>>> 0\\
@| @| @. @.\\
C_2(\wt X, \wt X_0) @>\ov\pa_2>>C_1(\wt X; \wt X_0) @>\ov\pa_1 >> C_0(\wt X, \wt
X_0)= 0
\endCD
\]
We can compute the groups $H^*(X;\ip)$ and $H^*(X,x_0;\ip$ by applying the
functor $\Hom_{\zp}(-,\ip)$ to the first and second sequences, respectively, and
then taking the cohomology of the cochain complexes obtained. Now,
\begin{equation*}
\begin{aligned}
H_1(\wt X;\wt X_0)&=\Ker \ov\pa_1/\Im \ov \pa_2=C_1(\wt X,\wt X_0)/\Im \ov
\pa_2\\
&=C_1(\wt X)/\im \pa_2\cong \ip.
\end{aligned}
\end{equation*}
The class $\ov \ber$ is given by the quotient homomorphism
\[
C_1(\wt X, \wt X_0) \to C_1(\wt X, \wt X_0)/\im \pa_2 \to \ip.
\]

Thus, the class $\ber$ is given by the same homomorphism
$C_1(\wt X) =C_1(\wt X, \wt X_0) \to \ip$, but it is considered modulo
$$
\Im \{\delta_1: \Hom_{\zp}(C_0(\wt X),\ip) \to \Hom_{\zp}(C_1(\wt X),\ip)\}.
$$
Clearly, this homomorphism coincides with $f$.
\qed

\m The exact sequence $0\to \ip \to \zp\to \Z\to 0$ of $\zp$-modules
yields the cohomology exact sequence
\[
\CD
\cdots\, \to H^0(X;\Z) @>\delta>> H^1(X;\ip) @>>> H^1(X;\zp) \to\, \cdots
\endCD
\]

\begin{prop}\label{p:second}
We have the equality $\ber_X=\delta(1)$ where $1$ is the generator of
$H^0(X;\Z)=\Z$.
\end{prop}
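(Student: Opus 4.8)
The plan is to compare the two descriptions of the Berstein class via the long exact sequence in cohomology induced by the short exact sequence of coefficients $0\to \ip \to \zp \to \Z \to 0$. By Proposition~\ref{p:first}, the class $\ber_X$ is represented by the cocycle $f\colon C_1(\wt X)\to \ip$ given by $f(c)=\pa_1(c)$. The connecting homomorphism $\delta\colon H^0(X;\Z)\to H^1(X;\ip)$ is computed at the cochain level by the snake-lemma recipe, so the entire task reduces to showing that this recipe, applied to the generator $1\in H^0(X;\Z)$, yields precisely the cocycle $f$ (up to a coboundary).

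To carry this out, I would first unwind $H^0(X;\Z)=\Hom_{\zp}(C_0(\wt X),\Z)$ and identify the generator $1$ with the augmentation cocycle, namely the $\zp$-homomorphism $\eps\colon C_0(\wt X)\cong\zp \to \Z$ sending the unique $0$-cell to $1$. The snake construction then proceeds in two steps. First, lift $\eps$ through the surjection $\Hom_{\zp}(C_0(\wt X),\zp)\to \Hom_{\zp}(C_0(\wt X),\Z)$ induced by the augmentation $\zp\to\Z$; since $C_0(\wt X)\cong \zp$ is free, a natural lift is the identity homomorphism $\id\colon C_0(\wt X)\cong\zp \to \zp$, which indeed projects to $\eps$. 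Second, apply the coboundary operator $\delta_0=\pa_1^*$, obtaining the cochain $\id\circ\pa_1\colon C_1(\wt X)\to \zp$, i.e.\ the map $c\mapsto \pa_1(c)$. Because $\eps$ is a genuine cocycle (it kills nothing in degree $0$, as $H^{-1}=0$), this composite lands in $\ker(\zp\to\Z)=\ip$, so it defines a cocycle valued in $\ip$. This is exactly the cocycle $f$ of Proposition~\ref{p:first}, and therefore $\delta(1)=[f]=\ber_X$.

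The main step requiring care is the identification of the lift and the verification that $\id\circ\pa_1$ genuinely factors through $\ip$; this is where one must use that $C_0(\wt X)\cong\zp$ is \emph{free} of rank one, so that the augmentation cocycle admits the identity as a lift, and use the already-established fact that $\Im\pa_1=\ip$. I expect the only real subtlety to be bookkeeping with signs and with the dualization direction, since we are applying $\Hom_{\zp}(-,\cdot)$ to a chain complex and must track how $\pa_1$ becomes the coboundary $\delta_0$. Once the augmentation cocycle is correctly identified and lifted, the computation of $\delta(1)$ collapses to the composite $\pa_1$, matching Proposition~\ref{p:first} on the nose, and the proof concludes.
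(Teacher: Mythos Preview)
Your proposal is correct and follows essentially the same approach as the paper: compute $\delta(1)$ via the snake-lemma recipe by lifting the augmentation $\eps\colon C_0(\wt X)\cong\zp\to\Z$ to the identity $\zp\to\zp$, then apply $\pa_1^*$ and observe that the resulting cocycle is exactly the $f$ of \propref{p:first}. Your write-up is in fact more detailed than the paper's, which simply records the relevant commutative diagram and remarks that $f$ fits into it.
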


\p The class $\delta(1)$ is given by any cocycle $h: C_1(\wt X) \to \ip$ such
that diagram below commutes.
\[
\CD
C_1(\wt X)@>\pa_1 >> C_0(\wt X) @= C_0(\wt X)\\
@VhVV @| @V1VV\\
\ip @>>> \zp @>>> \Z
\endCD
\]
Clearly, the map $f$ from \propref{p:first} satisfies this property.
\qed

\begin{thm}\label{t:univ} Let $X$ be a space with $\pi_1(X)=\pi$ and put
$\ber=\ber_X$. Let $A$ be a local coefficient system on $X$.

\par {\rm (i)}  For any $a\in H_1(X;A),
a\ne 0$, we have $\la a, \ber_X \ra \ne 0$ in $A\otimes_{\zp}\ip$.

\par {\rm (ii)} For every $u\in H^1(X;A)$ there exists a $\zp$-homomorphism $h:
\ip\to A$ such that $u=h_*\ber$.
\end{thm}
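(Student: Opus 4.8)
The plan is to prove both parts by working directly with the cocycle description of the Berstein class established in Proposition 2.1, namely that $\ber_X$ is represented by the cocycle $f\colon C_1(\wt X)\to\ip$ given by $f(c)=\pa_1(c)$. Throughout I would identify $H^*(X;A)$ and $H_*(X;A)$ with the cohomology and homology of the cochain and chain complexes obtained by applying $\Hom_{\zp}(-,A)$ and $A\otimes_{\zp}(-)$ to the free resolution $C_*(\wt X)$ of $\Z$ over $\zp$.

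For part (ii), given $u\in H^1(X;A)$ I would choose a cocycle $g\colon C_1(\wt X)\to A$ representing $u$. The key observation is that $g$ being a cocycle means $g\circ\pa_2=0$, so $g$ vanishes on $\im\pa_2$ and therefore factors through $C_1(\wt X)/\im\pa_2$. By the computation in Proposition 2.1 this quotient is exactly $\ip$, with the projection realized by $f$ itself. Thus there is a $\zp$-homomorphism $h\colon\ip\to A$ with $g=h\circ f$, and passing to cohomology gives $u=h_*\ber$, as required. The only subtlety is checking that $h_*$ on the cochain level is compatible with the coboundaries, i.e. that modifying $g$ by a coboundary changes $h$ only by something that dies in cohomology; this follows because $\delta g$ for a $0$-cochain $\phi\colon C_0(\wt X)\to A$ equals $\phi\circ\pa_1$, which likewise factors through $f$.

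For part (i), I would dualize this picture. A class $a\in H_1(X;A)$ is represented by a cycle $z\in A\otimes_{\zp}C_1(\wt X)$ with $(\id\otimes\pa_1)z=0$ meaning $z$ lies in the kernel of the induced boundary; the pairing $\la a,\ber\ra$ is computed by applying $\id\otimes f$ to $z$, landing in $A\otimes_{\zp}\ip$. The claim $\la a,\ber\ra\ne0$ whenever $a\ne0$ amounts to showing that the map induced by $f$ on homology, from $H_1(X;A)$ to its image in $A\otimes_{\zp}\ip$, is injective. I expect this to follow from the long exact sequence associated to $0\to\ip\to\zp\to\Z\to0$ together with the characterization $\ber=\delta(1)$ from Proposition 2.2: the connecting homomorphism $\delta$ and its naturality in the coefficient system $A$ should identify the pairing with $\ber$ as a genuine connecting map, whose injectivity on $H_1$ comes from the vanishing of $H_1(X;\zp)$-type terms or, more directly, from the fact that $C_0(\wt X)\cong\zp$ is a free module of rank one generated by the basepoint vertex.

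The main obstacle will be part (i), specifically verifying the nonvanishing of the pairing rather than merely writing down its formula. The cocycle manipulations for part (ii) are essentially formal consequences of the resolution structure, but for (i) one must genuinely use that $C_0(\wt X)\cong\zp$ is free of rank one on the unique vertex: this is what forces $\ker(\id\otimes\pa_1)$ to inject into $A\otimes_{\zp}\ip$ under $\id\otimes f$, since $f$ is literally $\pa_1$ viewed as a surjection onto $\ip=\im\pa_1$. I would therefore be careful to track how the single-vertex $CW$ structure on $X$ makes $C_0(\wt X)$ rank one, as this rank-one hypothesis is precisely where the argument could fail for a space with more vertices and is the crux that makes $\ber$ behave like a universal coefficient class.
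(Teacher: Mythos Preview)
Your treatment of part (ii) is correct and essentially identical to the paper's: a cocycle $g$ representing $u$ satisfies $g\pa_2=0$, hence factors as $h\circ f$ through $\ip\cong C_1(\wt X)/\Im\pa_2$, giving $u=h_*\ber$.

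For part (i), however, you have misidentified the crux. You write that the rank-one condition on $C_0(\wt X)$ ``forces $\ker(\id\otimes\pa_1)$ to inject into $A\otimes_{\zp}\ip$ under $\id\otimes f$''. This is both the wrong claim and the wrong reason. The restriction of $1\otimes f$ to cycles is typically \emph{not} injective: any nonzero boundary $(1\otimes\pa_2)(w)$ is a cycle killed by $1\otimes f$, since $f\pa_2=\pa_1\pa_2=0$. What you actually need is the weaker statement that $(1\otimes f)(z)=0$ only when $z$ is a boundary, i.e.\ injectivity on \emph{homology}, not on cycles.

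The mechanism for this is not the rank of $C_0$ but the right exactness of the tensor product, which you never invoke. Since $\ker f=\ker\pa_1=\Im\pa_2$ (the resolution is exact at $C_1$), the sequence
\[
C_2(\wt X)\lr C_1(\wt X)\stackrel{f}{\lr}\ip\lr 0
\]
is exact, and tensoring with $A$ over $\zp$ preserves this exactness on the right: hence $\ker(1\otimes f)=\Im(1\otimes\pa_2)$. Now if $z$ represents $a\ne 0$ then $z\notin\Im(1\otimes\pa_2)=\ker(1\otimes f)$, so $\la a,\ber\ra=(1\otimes f)(z)\ne 0$. This is exactly the paper's argument, and it is a one-line deduction once right exactness is on the table. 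Your proposed detour through the long exact sequence of $0\to\ip\to\zp\to\Z\to 0$ and a hoped-for vanishing of ``$H_1(X;\zp)$-type terms'' is unnecessary and, as stated, too vague to stand as a proof.
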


\p We use the interpretation of $\ber$ from
\propref{p:first}. Consider the part
\[
\CD
C_2(\wt X) @>\pa_2>> C_1(\wt X) @>\pa_1 >> C_0(X)@>>> \Z @>>> 0
\endCD
\]
of the $\zp$-resolution of $\Z$.
\par (i) Since the tensor product is right
exact, we obtain the diagram
\[
\CD
A\otimes_{\zp}C_{2}(\wt X) @>1\otimes\pa_{1}>>A\otimes_{\zp}C_1(\wt X)
@>1\otimes f>>A\otimes_{\zp}\ip @>>> 0\\
@. @. g @ VVV @.\\
@. @. A\otimes_{\zp}C_{0}(\wt X)
\endCD
\] where the row is exact.  The composition
\[ \CD A\otimes_{\zp}C_1(\wt X)@>1\otimes
f>>A\otimes_{\zp}\ip @>g>> A\otimes_{\zp}C_{0}(\wt X)
\endCD \] coincides
with~$1\otimes \pa_1$. We represent the class~$a$ by a
cycle
\[
z\in A\otimes_{\zp}C_1(\wt X).
\]
Since~$z\notin \Im(1\otimes
\pa_{2})$, we conclude that
$$
(1\otimes f)(z)\ne 0 \text{ in } A\otimes_{\zp}\ip=H_0(X;\A\otimes \ip).
$$
Thus,~$\la a,\ber \ra \ne 0$.

\par(ii) The
class $u$ is given by a homomorphism $\varphi: C_1(\wt X)\to A$ with
$\varphi \pa_2=0$. So, there exists $h: \ip=\Im
\pa_1 \to M$ with $h\pa_1=\varphi$.
\qed

\begin{cory}\label{c:product}
Suppose that there exist $u_i\in H^1(X;A_i), i=1, \ldots, n$ such
that $u_1\cup \cdots \cup u_n\ne 0$ in $H^n (X; A_1\otimes \cdots
\otimes A_n)$. Then $\ber^{n}\ne 0$.
\end{cory}

\section{The powers of the Berstein class}

Throughout this section we fix a group $\pi$ and denote
$\ber_{\pi}$ by $\ber$. We note that the
exact sequence of $\zp$-modules
\begin{equation}\label{e:basic}
\CD
0 \to I(\pi)\to\Z\pi @>\varepsilon>>\Z\to 0
\endCD
\end{equation}
stays an exact sequence of $\zp$-modules after tensoring
over $\Z$ with any $\zp$-module $M$:
$$
\CD
0\to I(\pi)\otimes M\to\Z\pi\otimes M @>\varepsilon\otimes 1>> M\to 0.
\endCD
$$
Here the $\Z$-tensor product $M\otimes N$ is taken with
the diagonal $\pi$-action.

\begin{prop}\label{p:delta} For every short
exact sequence of left $\zp$-modules $0\to N'\to N\to
N''\to 0$ and every right $\zp$-module $M$ such that the
sequence
\begin{equation}\label{e:coef}
0\to N'\otimes M\to N\otimes M\to N''\otimes M\to 0
\end{equation}
is exact, there is the formula $\delta(u\cup v)=\delta u\cup v$
where $u\in H^p(\pi; N'')$, $v\in H^q(\pi;M)$, $u\cup v\in
H^{p+q}(\pi; N'\otimes M)$ and
$$
\delta: H^{p+q}(\pi; N''\otimes M) \to H^{p+q+1}(\pi;
N'\otimes M)
$$
is the connecting homomorphism in the long exact sequence obtained
from the coefficient sequence $\eqref{e:coef}$.
\end{prop}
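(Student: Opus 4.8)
The plan is to argue at the cochain level using a fixed free resolution $P_\ast\to\Z$ of $\Z$ over $\zp$ together with a $\zp$-equivariant diagonal approximation $\Delta\colon P_\ast\to P_\ast\otimes P_\ast$ (with the diagonal action on the target), which simultaneously computes $H^\ast(\pi;-)$ and the cup products. Represent $u$ by a cocycle $\varphi\colon P_p\to N''$ and $v$ by a cocycle $\psi\colon P_q\to M$; then $u\cup v$ is represented by the composite $(\varphi\otimes\psi)\circ\Delta\colon P_{p+q}\to N''\otimes M$, where $\varphi\otimes\psi$ is understood to be supported on the $(p,q)$-summand of $P_\ast\otimes P_\ast$. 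The whole identity will follow once I track how $\delta$ acts on such a representative.

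First I would compute $\delta u$ explicitly. Since $P_p$ is projective and $N\to N''$ is onto, lift $\varphi$ to a $\zp$-homomorphism $\wt\varphi\colon P_p\to N$. Because $\varphi$ is a cocycle, the coboundary $d\wt\varphi$ maps $P_{p+1}$ into $\Ker(N\to N'')=N'$, and the resulting cocycle $\varphi':=d\wt\varphi\colon P_{p+1}\to N'$ represents $\delta u$. The key step is then to make a compatible choice of lift for the cocycle representing $u\cup v$: namely $(\wt\varphi\otimes\psi)\circ\Delta\colon P_{p+q}\to N\otimes M$. Postcomposing with $N\otimes M\to N''\otimes M$ recovers $(\varphi\otimes\psi)\circ\Delta$, so this is a genuine lift. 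Here the hypothesis that $\eqref{e:coef}$ is exact is used twice: it is what makes the connecting homomorphism $\delta$ for the coefficient sequence defined at all, and it guarantees $\Ker(N\otimes M\to N''\otimes M)=N'\otimes M$, so that the coboundary of the lift will land in the correct module.

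By definition $\delta(u\cup v)$ is represented by the coboundary of this lift, and invoking the Leibniz rule for the cochain cup product, together with $d\psi=0$, gives
\[
d\bigl((\wt\varphi\otimes\psi)\circ\Delta\bigr)=(d\wt\varphi\otimes\psi)\circ\Delta=(\varphi'\otimes\psi)\circ\Delta,
\]
which is precisely a cocycle representing $\delta u\cup v$. Passing to cohomology classes yields the asserted equality $\delta(u\cup v)=\delta u\cup v$.

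The main obstacle is bookkeeping rather than anything conceptual: one must fix a (coassociative) $\zp$-equivariant diagonal $\Delta$ and verify the Leibniz identity $d(\alpha\cup\beta)=d\alpha\cup\beta+(-1)^{|\alpha|}\alpha\cup d\beta$ in the present mixed-coefficient setting, where $\alpha=\wt\varphi$ is merely a lift (not a cocycle) and $\beta=\psi$ takes values in a different module. It is exactly because $\psi$ is a cocycle that the sign-bearing term $(-1)^p\,\wt\varphi\cup d\psi$ drops out, which is why the final formula carries no sign. Alternatively, the argument can be packaged conceptually: cupping with the fixed class $v$ defines a morphism from the long exact cohomology sequence of $0\to N'\to N\to N''\to 0$ to that of $\eqref{e:coef}$, and the identity is the commutativity of the square joining the two connecting homomorphisms; this reduces once more to the cochain computation above.
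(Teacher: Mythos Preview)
Your argument is correct: this is the standard cochain-level verification using a projective resolution and a diagonal approximation, together with the Leibniz rule and the fact that $\psi$ is a cocycle so the sign term disappears. The paper itself gives no independent argument here---its ``proof'' is simply a citation to \cite[V,(3.3)]{Bro}---so you have supplied exactly the computation that the reference contains, and there is nothing further to compare.
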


\p see \cite[V,(3.3)]{Bro}. \qed

\begin{cory}\label{c:delta}
The connecting homomorphism
$$
\delta:H^i(\pi;M)\to H^{i+1}(\pi;I(\pi)\otimes M)
$$
induced by the coefficient exact sequence
\begin{equation}\label{e:alpha}
\CD
0\to I(\pi)\otimes M\to \Z\pi\otimes M @>\alpha>> \Z\otimes M\to 0
\endCD
\end{equation}
has the form $\delta(u)=\ber \cup u$.
\end{cory}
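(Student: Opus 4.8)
The plan is to obtain this as a direct specialization of Proposition~\ref{p:delta}. I would apply that proposition to the basic sequence \eqref{e:basic}, taking $N'=\ip$, $N=\Z\pi$, $N''=\Z$, together with the given right module $M$. For this choice the induced coefficient sequence \eqref{e:coef} is precisely \eqref{e:alpha} (after the identification $\Z\otimes M\cong M$), and its exactness for every $\zp$-module $M$ is exactly the observation recorded immediately after \eqref{e:basic}: the sequence \eqref{e:basic} is $\Z$-split, so it survives tensoring over $\Z$. Hence the hypotheses of Proposition~\ref{p:delta} are met.

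Next I would specialize the formula $\delta(u\cup v)=\delta u\cup v$ to degree $p=0$, choosing $u=1$, the generator of $H^0(\pi;\Z)=\Z$, and letting $v$ be an arbitrary class in $H^q(\pi;M)$ with $q=i$. Under $\Z\otimes M\cong M$ the cup product $1\cup v$ equals $v$, so the left-hand side $\delta(u\cup v)$ is just $\delta(v)\in H^{i+1}(\pi;\ip\otimes M)$, where $\delta$ is the connecting homomorphism of \eqref{e:alpha}, i.e. the map named in the corollary.

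For the right-hand side I would identify $\delta u=\delta(1)$. Since $\pi$-cohomology is the cohomology of $K(\pi,1)$ and $\ber=\ber_\pi$, Proposition~\ref{p:second} tells us that the image of the generator of $H^0(\pi;\Z)$ under the connecting map of \eqref{e:basic} is exactly $\ber$; thus $\delta u=\ber$ and $\delta u\cup v=\ber\cup v$. Combining the two computations gives $\delta(v)=\ber\cup v$ for every $v\in H^i(\pi;M)$, which is the claim (writing $u$ for $v$ in the corollary's notation).

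I do not expect a genuine obstacle here, as everything reduces to plugging $p=0$ into an established product formula. The one place that deserves a word of care is that the specialization is carried out in degree zero, where the sign $(-1)^p$ that would otherwise accompany such a Leibniz-type rule is trivial; this is what lets the identity $\delta(1\cup v)=\delta(1)\cup v$ hold on the nose with no sign. The remaining identifications, that \eqref{e:alpha} is the tensored form of \eqref{e:basic} and that $1\cup v=v$, are routine.
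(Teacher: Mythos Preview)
Your argument is correct and matches the paper's proof essentially line for line: both write the given class as $1\cup v$, invoke Proposition~\ref{p:delta} applied to the sequence~\eqref{e:basic} tensored with $M$, and then use Proposition~\ref{p:second} to identify $\delta(1)=\ber$. Your write-up is simply more explicit about the identifications and the (trivial) sign, but the approach is the same.
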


\p Note that every $u\in H^1(\pi;\Z\otimes M)$ can be presented as
$1\cup u$ where $1\in H^0(\pi;\Z)$. Since by Proposition
\ref{p:second} $\delta(1)=\ber$, the result follows from
\propref{p:delta}. \qed

\m Let $I=I(\pi)$.
Consider the exact sequence
$$
\CD
0\to I^{k+1} @>{\beta_k}>> \Z\pi\otimes I^k @>\alpha_k>> I^k\to 0
\endCD
$$
obtained from the sequence \eqref{e:basic} via tensoring by $I^k$.

\begin{lem}\label{l:free} {\rm (i)} The $\zp$-module $\zp^{\otimes k}=\zp\otimes
\cdots \otimes\zp$ is free for all $k$;

\par{\rm (ii)} the $\zp$-module $\zp\otimes I^k$ is projective for all $k$.
\end{lem}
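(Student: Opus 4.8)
The plan is to reduce both statements to a single \emph{untwisting isomorphism} for the diagonal action. For an arbitrary left $\zp$-module $M$, I claim there is an isomorphism of $\zp$-modules
\[
\Phi\colon \zp\otimes M \xrightarrow{\ \cong\ } \zp\otimes M_0,
\]
where on the left the action is diagonal and on the right $M_0$ denotes the underlying abelian group of $M$ equipped with the trivial $\pi$-action, so that $\pi$ acts only on the first factor. I would define $\Phi$ by $g\otimes m\mapsto g\otimes g^{-1}m$ for $g\in\pi$, $m\in M$, extended $\Z$-bilinearly, with inverse $g\otimes m\mapsto g\otimes gm$. A direct check shows $\Phi$ is $\zp$-equivariant: the diagonal action $h\cdot(g\otimes m)=hg\otimes hm$ is carried to $hg\otimes g^{-1}m=h\cdot(g\otimes g^{-1}m)$.

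The point of $\Phi$ is that $\zp\otimes M_0$ is a \emph{free} $\zp$-module whenever $M$ is free as an abelian group: choosing a $\Z$-basis $\{m_i\}$ of $M$, the elements $\{1\otimes m_i\}$ form a $\zp$-basis of $\zp\otimes M_0$. Hence the principle I will use is: \emph{if $M$ is $\Z$-free, then $\zp\otimes M$ is $\zp$-free.} Both parts of the lemma then follow once I verify that the relevant abelian groups are free.

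For (i), I would write $\zp^{\otimes k}=\zp\otimes_\Z \zp^{\otimes(k-1)}$ with the diagonal action, taking $M=\zp^{\otimes(k-1)}$ with its own diagonal action. Since $\zp$ is a free abelian group with basis $\pi$, the $(k-1)$-fold $\Z$-tensor product $\zp^{\otimes(k-1)}$ is again free abelian, and the principle above gives freeness of $\zp^{\otimes k}$. Alternatively one can exhibit the explicit $\zp$-basis $\{1\otimes g : g\in\pi\}$ of $\zp\otimes\zp$ and iterate. For (ii), I first note that the sequence \eqref{e:basic} splits over $\Z$ because $\Z$ is $\Z$-free, so $I$ is a direct summand of $\zp$ as an abelian group and is therefore free abelian, with basis $\{g-1 : g\in\pi\setminus\{1\}\}$. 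Consequently $I^k=I^{\otimes k}$ is free abelian, and applying the principle with $M=I^k$ shows that $\zp\otimes I^k$ is in fact $\zp$-free, and in particular projective as claimed.

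I expect no serious obstacle: the entire content is carried by the untwisting isomorphism $\Phi$, and the only thing requiring care is keeping the diagonal-action conventions straight when checking that $\Phi$ is $\zp$-linear and that the asserted $\Z$-bases transport to $\zp$-bases under $\Phi$.
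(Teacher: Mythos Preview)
Your argument is correct, and in fact for part (ii) it yields a stronger conclusion than stated: $\zp\otimes I^k$ is \emph{free}, not merely projective.

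Your route differs from the paper's. For (i) the paper identifies $\zp^{\otimes k}$ with $\Z[\pi^k]$ and observes that the diagonal $\pi$-action is simply restriction along the diagonal subgroup $\pi\hookrightarrow\pi^k$; freeness follows because $\Z[G]$ is free over $\Z[H]$ with basis $G/H$. Your untwisting isomorphism $\Phi$ is essentially the same mechanism made explicit at the level of a single tensor factor, so the two arguments are close cousins. The real divergence is in (ii): the paper argues by induction on $k$, tensoring the augmentation sequence by $I^{k-1}\otimes\zp$ to obtain
\[
0\to I^k\otimes\zp\to \zp\otimes I^{k-1}\otimes\zp\to I^{k-1}\otimes\zp\to 0,
\]
splitting it via the inductive hypothesis, and concluding projectivity of $I^k\otimes\zp$ as a summand of the middle term. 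Your approach bypasses the induction entirely: once you know $I$ (hence $I^k$) is $\Z$-free, the untwisting gives freeness of $\zp\otimes I^k$ in one stroke. This is shorter, more elementary, and sharper; the paper's inductive argument, on the other hand, never needs the explicit $\Z$-basis of $I$ and would adapt to situations where one only knows the relevant module is a summand of something nice rather than $\Z$-free outright.
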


\p (i) Clearly, $\zp\otimes \cdots \otimes \zp=\Z[\pi\times \cdots \times \pi]$
is a free
$\Z[\pi\times \cdots \times \pi]$-module. Regarding $\pi$ as the diagonal
subgroup of
$\pi\times \cdots \times \pi$, we conclude that $\zp^{\otimes k}$ is a free
$\zp$-module. (If $H$ is a subgroup of $G$ then $\Z[G]$ is a free
$\Z[H]$-modulee
with the base $G/H$.)

(ii) We tensor the sequence \eqref{e:basic} by $I^{k-1}\otimes \zp$ and get the
exact sequence
\[
0\to I^k\otimes \zp\to I^{k-1}\otimes \zp\otimes \zp \to I^{k-1}\otimes \zp\to
0.
\]
Now induct on $k$, using (i) as the basis of induction.
\qed

\begin{theorem}\label{t:power}
If $\cd \pi =n$, then $\ber^{n}\ne 0$.
\end{theorem}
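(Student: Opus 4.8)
The plan is to realize $\ber^n$ as an iterated connecting homomorphism and then to exploit the projectivity forced by the hypothesis $\cd\pi=n$. By \corref{c:delta} applied with $M=I^k$, the connecting homomorphism
$$
\delta_k\colon H^k(\pi;I^k)\to H^{k+1}(\pi;I^{k+1})
$$
of the coefficient sequence $0\to I^{k+1}\xrightarrow{\beta_k}\zp\otimes I^k\xrightarrow{\alpha_k}I^k\to 0$ is given by $\delta_k(u)=\ber\cup u$. Starting from $1\in H^0(\pi;\Z)$ and using $\delta(1)=\ber$ from \propref{p:second}, I obtain $\ber^n=\delta_{n-1}\circ\cdots\circ\delta_1\circ\delta_0(1)$. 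Splicing these short exact sequences along the maps $\beta_{k-1}\circ\alpha_k\colon\zp\otimes I^k\to\zp\otimes I^{k-1}$ yields a resolution
$$
\cdots\to\zp\otimes I^2\to\zp\otimes I\to\zp\xrightarrow{\eps}\Z\to 0
$$
of $\Z$ by $\zp$-modules that are projective by \lemref{l:free}(ii), and whose $k$-th syzygy is exactly $I^k$.

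Next I would bring in the hypothesis. Since $\cd\pi=n$ is the projective dimension of $\Z$ over $\zp$, a standard homological fact (the vanishing of $\operatorname{Ext}^{n+1}$ forces the $n$-th syzygy to be projective) shows that $I^n$ is a projective $\zp$-module. Hence
$$
0\to I^n\xrightarrow{\beta_{n-1}}\zp\otimes I^{n-1}\to\cdots\to\zp\xrightarrow{\eps}\Z\to 0
$$
is a projective resolution of length $n$. Computing $H^n(\pi;I^n)=\operatorname{Ext}^n_{\zp}(\Z,I^n)$ from it gives
$$
H^n(\pi;I^n)\cong\Hom_{\zp}(I^n,I^n)\big/\im\bigl(\beta_{n-1}^*\colon\Hom_{\zp}(\zp\otimes I^{n-1},I^n)\to\Hom_{\zp}(I^n,I^n)\bigr).
$$

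The crucial point is that under this isomorphism $\ber^n$ corresponds to the class of the identity $1_{I^n}$. This is the Yoneda interpretation of the iterated connecting homomorphism: the composite $\delta_{n-1}\circ\cdots\circ\delta_0$ applied to $1$ is the class of the spliced $n$-fold extension $0\to I^n\to\zp\otimes I^{n-1}\to\cdots\to\zp\to\Z\to 0$, and the comparison map from the chosen resolution to this extension is the identity in top degree. It generalizes the defining property of $\ber$ itself, which by construction corresponds to $1_{I(\pi)}$ in the analogous quotient attached to $0\to I\to\zp\to\Z\to 0$. Verifying this identification carefully, by tracking the cocycle-level effect of the $\delta_k$ through the splicing, is the step I expect to demand the most attention and to be the main obstacle.

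Finally I would show $[1_{I^n}]\ne 0$. If $1_{I^n}=\phi\circ\beta_{n-1}$ for some $\zp$-homomorphism $\phi\colon\zp\otimes I^{n-1}\to I^n$, then $\phi$ is a retraction of $\beta_{n-1}$, so the sequence $0\to I^n\xrightarrow{\beta_{n-1}}\zp\otimes I^{n-1}\xrightarrow{\alpha_{n-1}}I^{n-1}\to 0$ splits and exhibits $I^{n-1}$ as a direct summand of the projective module $\zp\otimes I^{n-1}$. Then $I^{n-1}$ is projective, so $0\to I^{n-1}\to\zp\otimes I^{n-2}\to\cdots\to\zp\to\Z\to 0$ is a projective resolution of length $n-1$, whence $\cd\pi\le n-1$, contradicting the hypothesis. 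Therefore $[1_{I^n}]\ne 0$, that is, $\ber^n\ne 0$.
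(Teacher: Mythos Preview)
Your proposal is correct and follows essentially the same route as the paper: build the projective resolution with $C_k=\zp\otimes I^k$ and $\partial_k=\beta_{k-1}\circ\alpha_k$, identify $\ber^n$ with the canonical class coming from $\alpha_n$ (equivalently $1_{I^n}$) via the iterated connecting homomorphism of \corref{c:delta}, and then observe that its vanishing would split $0\to I^n\to\zp\otimes I^{n-1}\to I^{n-1}\to 0$, making $I^{n-1}$ projective and forcing $\cd\pi\le n-1$. The only difference is that you first invoke $\cd\pi=n$ to make $I^n$ projective and pass to the truncated resolution before identifying $\ber^n$ with $[1_{I^n}]$; the paper avoids this detour by working directly in the untruncated resolution and proving $\ber^k=[\alpha_k]$ at the cocycle level for all $k$, which makes the ``main obstacle'' you flag essentially a one-line induction.
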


\p For $n=1$ this is trivial. Let $n>1$.
In view of Lemma \ref{l:free} there is a projective resolution
\begin{equation}\label{e:res}
C_n \stackrel{\partial_n}\lr C_{n-1} \lr \cdots \lr C_1 \lr C_0 \lr \Z\lr 0
\end{equation}
such that $C_k=\Z\pi\otimes I^k$ for all $k$ and with
$$
\CD
\partial_k=\beta_{k-1}\circ\alpha_k:\Z\pi\otimes I^k @>\alpha_k>> \Z\otimes I^k
@>\beta_{k-1}>> \Z\pi\otimes
I^{k-1}.
\endCD
$$

Claim: $\ber^{k}=[\alpha_k]$. We apply the induction on $k$.

For $k=1$, note that in \propref{p:first} we can construct $\ber$
using any $\zp$-resolution of $\Z$ provided $C_0=\zp$. Hence,
$\ber=[\ga_1]$. Furthermore, it is easy to see that
$\delta([\ga_k])=[\ga_{k+1}]$, and hence $\ber^{k}=[\ga_k]$ by
\corref{c:delta}

\m Now we prove that $\ber^n\ne 0$. Suppose the contrary. Then
$\alpha_n=\gf\circ\partial_n=\gf\circ\gb_{n-1}\circ\ga_n$ for some
$\gf:C_{n-1}\to I^n$.
Since $\ga_n$ is an epimorphism, we conclude that $\gf$ yields a $\zp$-splitting
of the sequence
$$
\CD
0\to I^n @>\beta_{n-1}>> \Z\pi\otimes I^{n-1} @>\alpha_{n-1}>> I^{n-1}\to 0.
\endCD
$$
This implies that $I^{n-1}$ is projective. Thus the above resolution
can be shorten by one. Therefore $\cd\pi\le n-1$. Contradiction.
\qed

The following universal property was stated as \cite[Proposition
3.4]{Sv} without proof.

\begin{cor}\label{c:svarz} Given a $\zp$-module $A$ and a class $a\in
H^k(\pi;A)$,
there exists a $\zp$-homomorphism $\mu: I^k \to A$ such that
$\mu_*(\ber)=a$.
\end{cor}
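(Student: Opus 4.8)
The plan is to read $\ber^k$ off the explicit projective resolution constructed in the proof of \theoref{t:power}. Writing $I=\ip$, that resolution is $C_\bullet$ with $C_k=\zp\otimes I^k$ and $\partial_k=\beta_{k-1}\circ\alpha_k$, and the Claim established there identifies $\ber^k$ with the cohomology class $[\alpha_k]$ of the $\zp$-cocycle $\alpha_k=\varepsilon\otimes 1\colon \zp\otimes I^k\to I^k$. Since $C_\bullet$ computes $H^*(\pi;-)$, I would first represent the given class $a\in H^k(\pi;A)$ by a $\zp$-cocycle $\psi\colon C_k\to A$, i.e.\ a $\zp$-homomorphism with $\psi\circ\partial_{k+1}=0$.

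The heart of the argument is to factor $\psi$ through $\alpha_k$. Since $\partial_{k+1}=\beta_k\circ\alpha_{k+1}$ and $\alpha_{k+1}$ is an epimorphism, the cocycle identity $\psi\circ\beta_k\circ\alpha_{k+1}=0$ forces $\psi\circ\beta_k=0$. The exactness of
$$
0\to I^{k+1}\xrightarrow{\beta_k}\zp\otimes I^k\xrightarrow{\alpha_k}I^k\to 0
$$
gives $\im\beta_k=\ker\alpha_k$, so $\psi$ vanishes on $\ker\alpha_k$. As $\alpha_k$ is surjective, $\psi$ descends uniquely to a $\zp$-homomorphism $\mu\colon I^k\to A$ with $\psi=\mu\circ\alpha_k$.

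It then remains only to compute $\mu_*(\ber^k)=\mu_*[\alpha_k]=[\mu\circ\alpha_k]=[\psi]=a$, and the construction is complete. The individual steps are routine, so the main obstacle is bookkeeping rather than substance: one must invoke the Claim inside the proof of \theoref{t:power} to know that $\alpha_k$ genuinely represents $\ber^k$ in this particular resolution, and one must keep track of the diagonal $\pi$-action throughout, so that every map in sight, and in particular the factored map $\mu$, is honestly $\zp$-linear. Finally, a degree count shows that the intended conclusion is $\mu_*(\ber^k)=a$, since $\mu$ is defined on $I^k$ and $a\in H^k(\pi;A)$; the expression $\mu_*(\ber)$ would not even be defined.
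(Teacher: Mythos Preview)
Your argument is correct and is essentially the paper's own proof, just spelled out in more detail: represent $a$ by a cocycle on the resolution \eqref{e:res}, use the cocycle condition and the surjectivity of $\alpha_{k+1}$ to factor it through $\alpha_k$, and invoke the Claim $\ber^k=[\alpha_k]$ from the proof of \theoref{t:power}. Your observation that the conclusion should read $\mu_*(\ber^k)=a$ rather than $\mu_*(\ber)=a$ is also correct.
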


\begin{proof} Consider the resolution \eqref{e:res}. The class $a$
is given by a homomorphism $h:\zp\otimes I^k\to A$. Since $h$ is a
cocycle, $h\circ \pa_{k+1}=0$, we conclude that $h=\mu \circ\ga_k$
for some $\mu: I^k\to A$.
\end{proof}

\begin{remarks} 1. Of course, \corref{c:svarz} implies \theoref{t:power}.
However, we decided to prove \theoref{t:power} here since
Svarc~\cite{Sv} stated it without proof.

2. For $n\ne 2$ \theoref{t:power} follows from \theoref{t:berstein}
in view of the equality $\cat K(\pi,1)=n$ and existence of an
$n$-dimensional $CW$-space $K(\pi,1)$, \cite{EG}. So, the new point
is the case $n=2$.
\end{remarks}

\section{Applications to the Lusternik--Schnirelmann category}

Now we can prove the Berstein-Svarc Theorem for $n=2$.

\begin{prop}\label{p:cat2}
If $X$ is a connected $2$-dimensional $CW$-complex with $\pi_1(X)$ free,
then $\cat X \le 1$.
\end{prop}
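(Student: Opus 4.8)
The plan is to show that $X$ is homotopy equivalent to a wedge of circles and $2$-spheres. Such a wedge is a suspension, $\bigvee S^1\vee\bigvee S^2=\Sigma\bigl((\bigvee S^0)\vee(\bigvee S^1)\bigr)$, hence a co-$H$-space, so its LS-category is at most $1$; since $\cat$ is a homotopy invariant this yields $\cat X\le 1$. Thus the whole problem reduces to the homotopy classification of $2$-complexes with free fundamental group, and that classification is where the work lies.

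To obtain the classification, set $\pi=\pi_1(X)$ and examine the cellular chain complex of the universal cover,
\[
0\to C_2\xrightarrow{\pa_2}C_1\xrightarrow{\pa_1}C_0\xrightarrow{\eps}\Z\to 0,
\]
in which each $C_i$ is a free $\zp$-module on the $i$-cells and, since $\wt X$ is simply connected, $H_1(\wt X)=0$ and $\pi_2(X)=H_2(\wt X)=\Ker\pa_2$. Because $\pi$ is free, the augmentation ideal $\ip=\Im\pa_1$ is a free $\zp$-module (classically, on $\{g_i-1\}$; this is the Stallings--Swan input $\cd\pi\le 1$). Hence $\pa_1\colon C_1\to\ip$ splits and $\Ker\pa_1=\Im\pa_2$ is a summand of a free module, so it is projective; by Swan's theorem, projective $\zp$-modules are free when $\pi$ is free, so $\Im\pa_2$ is free. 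Then $\pa_2\colon C_2\to\Im\pa_2$ splits, exhibiting $\pi_2(X)=\Ker\pa_2$ as a summand of $C_2$, hence projective, hence free over $\zp$.

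Next I would build a comparison map. Choosing loops realizing a free basis of $\pi$ gives $\bigvee S^1\to X$ that is an isomorphism on $\pi_1$, and choosing maps $S^2\to X$ representing a $\zp$-basis of $\pi_2(X)$ gives the $2$-spheres; together they define $f\colon W=(\bigvee S^1)\vee(\bigvee S^2)\to X$. The universal cover of $W$ deformation retracts, by collapsing the Cayley tree of $\pi$, onto a wedge of $2$-spheres indexed by $\pi\times(\text{the }S^2\text{'s})$, so $H_1(\wt W)=0$ and $\pi_2(W)=H_2(\wt W)$ is the free $\zp$-module on the $2$-spheres; by construction $f_*$ carries this basis to the chosen basis of $\pi_2(X)$ and so is an isomorphism. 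Thus $\wt f\colon\wt W\to\wt X$ is a map of simply connected $CW$-complexes inducing isomorphisms on all homology groups (both sides have $H_0=\Z$, $H_1=0$, matched $H_2$, and vanish above degree $2$), hence a homotopy equivalence by Whitehead; since $f$ is also an isomorphism on $\pi_1$, $f$ itself is a homotopy equivalence.

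I expect the main obstacle to be the freeness of $\pi_2(X)$ as a $\zp$-module: this is what lets me choose an honest $\zp$-basis with which to assemble $W$, and it is exactly the point where the hypothesis on $\pi$ enters, through the freeness of $\ip$ (Stallings--Swan) and Swan's identification of projective with free modules over the integral group ring of a free group. Once freeness is secured, the construction of $f$ and the verification that $\wt f$ is a homology isomorphism are routine, and the reduction to a suspension is immediate.
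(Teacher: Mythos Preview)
Your argument and the paper's both compare $X$ to a wedge $W=(\bigvee S^1)\vee(\bigvee S^2)$, but the paper asks for much less than a homotopy equivalence. It only builds $f:W\to X$ that is an isomorphism on $\pi_1$ and an \emph{epimorphism} on $\pi_2$---which needs no structure on $\pi_2(X)$ at all, just one $S^2$ for each element of some generating set. The homotopy fibre of such an $f$ is then simply connected, and since $\dim X=2$ all obstructions to a section $s:X\to W$ with $fs\simeq 1_X$ vanish; domination gives $\cat X\le\cat W=1$ immediately.

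You instead aim for the full equivalence $X\simeq W$, and for that you must show $\pi_2(X)$ is \emph{free} over $\Z\pi$, not merely projective. Your chain-level argument correctly yields projectivity, but the step ``projective $\Z\pi$-modules are free when $\pi$ is free'' is not the Swan theorem cited in this paper (that result runs the other way, deducing freeness of $\pi$ from $\cd\pi\le 1$). The statement you invoke is a separate and substantially harder fact about integral group rings of free groups; it needs its own justification and reference, and in the infinitely generated case it is not entirely routine. What your route buys is the sharper structural conclusion about the homotopy type of $X$; what the paper's route buys is a short, essentially self-contained proof of precisely the inequality $\cat X\le 1$ that the proposition asserts, with the freeness question bypassed altogether.
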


\p Since $\pi_1(X)$ is free, we can construct a space
$$
Y=(\vee_{\ga}S^1_{\ga})\bigvee(\vee_{\gb}S^2_{\gb})
$$
and a map $f: Y \to X$ such that $f_*: \pi_1(Y)\to \pi_1(X)$ is an
isomorphism and $f_*: \pi_2Y)\to \pi_2(X)$ is an epimorphism. Since the
homotopy fiber of $f$ is simply connected, there exists a map (homotopy
section) $s: X \to Y$ such that $fs\cong 1_X$. Hence, $\cat X \le \cat
Y=1$.
\qed

\begin{thm}\label{t:main}
Given a connected $2$-dimensional $CW$-complex $X$, the equality
$\cat X=2$ holds if and only if $\ber_X^{2}\ne 0$.
\end{thm}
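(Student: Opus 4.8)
The plan is to prove the two implications separately, using the hypothesis $\dim X = 2$ to bound $\cat X$ from above and the Berstein class to bound it from below. The upper bound is free: for any connected $CW$-complex $\cat X \le \dim X$, so here $\cat X \le 2$.

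First I would dispatch the easy direction. Suppose $\ber_X^2 \ne 0$. Since $\ber_X \in H^1(X; I(\pi))$ has positive degree, the nonvanishing of the $2$-fold cup product $\ber_X^2$ yields $\cat X \ge 2$ by the classical cup-length lower bound for the Lusternik--Schnirelmann category with (arbitrary) local coefficient systems; see \cite{CLOT}. Combined with $\cat X \le 2$ this gives $\cat X = 2$.

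For the converse I would argue the contrapositive, showing that $\ber_X^2 = 0$ forces $\cat X \le 1$, and the strategy is to transport the vanishing of $\ber_X^2$ to the classifying space and then chain together \theoref{t:power}, the Stallings--Swan theorem, and \propref{p:cat2}. Let $\pi = \pi_1(X)$ and realize the classifying map $c : X \to K(\pi,1)$ as the inclusion of $X$ as the $2$-skeleton of a $K(\pi,1)$ built from $X$ by attaching cells of dimension $\ge 3$. Because $c$ induces an isomorphism on $\pi_1$, naturality of the Berstein class (via its description $\ber = \delta(1)$ from \propref{p:second}, where $\delta$ is natural and $c^*$ fixes the generator $1 \in H^0(\,\cdot\,;\Z)$) gives $c^*\ber_{K(\pi,1)} = \ber_X$, hence $c^*(\ber_{K(\pi,1)}^2) = \ber_X^2 = 0$. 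Since the pair $(K(\pi,1), X)$ has no relative cells in dimensions $\le 2$, the long exact sequence of the pair shows that $c^* : H^2(K(\pi,1); M) \to H^2(X; M)$ is injective for every coefficient module $M$; taking $M = I(\pi)\otimes I(\pi)$ yields $\ber_{K(\pi,1)}^2 = 0$, i.e. $\ber_\pi^2 = 0$. Now I would deduce $\cd \pi \le 1$: if $\cd \pi = m \ge 2$, then $\ber_\pi^m \ne 0$ by \theoref{t:power}, and writing $\ber_\pi^m = \ber_\pi^2 \cup \ber_\pi^{m-2}$ contradicts $\ber_\pi^2 = 0$. So $\cd \pi \le 1$, whence $\pi$ is free by the Stallings--Swan theorem, and \propref{p:cat2} gives $\cat X \le 1$, completing the contrapositive.

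The main obstacle is the middle step: passing from $\ber_X^2 = 0$ on the $2$-complex $X$ to $\ber_\pi^2 = 0$ on $K(\pi,1)$. This rests on two points that must be handled with care --- the naturality $c^*\ber_{K(\pi,1)} = \ber_X$ under the $\pi_1$-isomorphism $c$, and the injectivity of $c^*$ on $H^2$, which is exactly where the hypothesis $\dim X = 2$ enters. Everything else is a routine assembly of the quoted results.
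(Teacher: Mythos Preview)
Your proof is correct and follows essentially the same route as the paper's, just phrased as the contrapositive: the paper argues directly that $\cat X=2\Rightarrow\pi$ not free (via \propref{p:cat2}) $\Rightarrow\cd\pi>1$ (Stallings--Swan) $\Rightarrow\ber_\pi^2\ne0$ (\theoref{t:power}) $\Rightarrow\ber_X^2\ne0$ (injectivity of $c^*$ on $H^2$), using the same naturality $c^*\ber_\pi=\ber_X$ and the same injectivity of $c^*$ that you invoke. Your version is in fact a bit more explicit about why $\cd\pi>1$ forces $\ber_\pi^2\ne0$ (via the factorization $\ber_\pi^m=\ber_\pi^2\cup\ber_\pi^{m-2}$), a step the paper leaves implicit.
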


\p Let $\pi$ denote $\pi_1(X)$. First, note that $\cat X \le \dim
X=2$. Clearly, if $\ber_X^{2}\ne 0$ then $\cat X \ge 2$, and so
$\cat X=2$. Furthermore, if $\cat X=2$ then $\pi$ is not free by
\propref{p:cat2}. Hence, according to Stallings~\cite{Stal} and
Swan~\cite{Swan}, $\cd\pi>1$. So, by \theoref{t:power},
$\ber_{\pi}^{2}\ne 0$. Let $f: X \to K(\pi,1)$ be a map that induces
an isomorphism on the fundamental groups. Then $f^*: H^2(\pi;M)\to
H^2(X;M)$ is injective for any $\zp$-module $M$, and the theorem
follows since $f^*(\ber_{\pi})=\ber_X$. \qed

\section{Applications to maps of degree 1}

Let $X$ be a closed oriented $n$-dimensional manifold with the
fundamental class $[X]$. Then for a local coefficient system $G$ on
$X$ we have the Poincar\'e duality isomorphism $\Delta: H^i(X;G) \to
H_{n-i}(X;G)$, $\Delta(x)=x\cap [X]$, see e.g. ~\cite{Bre}. Given a
map $f: M^n \to N^n$ of two closed manifolds and a local coefficient
system $G$ on $N$, we define the homomorphism $f^!:
H^*(M;f^*(G))\to^*(N;G) $ by setting
\[
f^!(x)=\Delta_N^{-1}f_*\Delta_M(x)
\]\

For future needs we state the following known proposition.

\begin{prop}\label{p:degree} The index $[\pi_1(N): f_*(\pi_1(M))]$
does not exceed $|\deg f|$ provided that $\deg f \ne 0$, and
$f^!f^*(x)=(\deg f) x$ for any $x\in H^*(M, f^*(G))$.
\end{prop}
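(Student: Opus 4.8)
The plan is to treat the two assertions separately. The identity $f^!f^*(x)=(\deg f)x$ for $x\in H^*(N;G)$ (the domain of $f^*$) will follow directly from the projection formula together with the definition of degree, while the index estimate $[\pi_1(N):f_*(\pi_1(M))]\le|\deg f|$ is a covering-space argument that rests on the multiplicativity of degree.

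I would first dispose of the degree formula. Unwinding the definition of $f^!$ gives
\[
f^!f^*(x)=\Delta_N^{-1}f_*\Delta_M(f^*x)=\Delta_N^{-1}f_*\bigl(f^*x\cap[M]\bigr).
\]
The key input is the projection formula $f_*(f^*x\cap a)=x\cap f_*(a)$, which is valid in the present setting with the local system $G$ on $N$ and its pullback $f^*G$ on $M$ (see e.g.~\cite{Bre}). Applying it with $a=[M]$ and using the defining property $f_*[M]=(\deg f)[N]$ of the degree yields $f_*(f^*x\cap[M])=x\cap(\deg f)[N]=(\deg f)\,\Delta_N(x)$, and hence $f^!f^*(x)=(\deg f)x$. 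This step is essentially routine once the projection formula is cited in the local-coefficient form.

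For the index bound I would set $H=f_*(\pi_1(M))\subseteq\pi_1(N)$ and $d=[\pi_1(N):H]$, and let $p:\hat N\to N$ be the covering corresponding to $H$. Since $f_*(\pi_1(M))\subseteq p_*(\pi_1(\hat N))=H$, the map $f$ lifts to $\hat f:M\to\hat N$ with $p\hat f=f$. I then distinguish two cases. If $d=\infty$, then $\hat N$ is a connected noncompact $n$-manifold, so $H_n(\hat N;\Z)=0$; consequently $\hat f_*[M]=0$ and $f_*[M]=p_*\hat f_*[M]=0$, contradicting $f_*[M]=(\deg f)[N]\ne0$. Hence $d<\infty$. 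In that case $\hat N$ is a closed oriented $n$-manifold, $p$ is a $d$-sheeted covering of degree $d$ in the orientation lifted from $N$, and multiplicativity of degree gives $\deg f=(\deg p)(\deg\hat f)=d\cdot\deg\hat f$. Since $\deg f\ne0$ forces $\deg\hat f\ne0$, so $|\deg\hat f|\ge1$, we conclude $d\le|\deg f|$.

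The step requiring the most care is the index bound, specifically the elimination of the infinite-index case: one must invoke the vanishing of the top homology of a connected noncompact $n$-manifold to guarantee $d<\infty$, and then verify that a $d$-sheeted covering of closed oriented manifolds has degree exactly $d$, so that $\deg(p\hat f)=(\deg p)(\deg\hat f)$ produces the stated bound. The degree formula is comparatively straightforward; the only point to check there is that the cap products and the push-forward $f_*$ are compatible with the coefficient system $G$ and its pullback $f^*G$.
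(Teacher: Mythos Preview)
Your proof is correct and follows the same approach as the paper: the degree identity via the projection formula $f_*(f^*x\cap[M])=x\cap f_*[M]$, and the index bound by lifting $f$ through the covering of $N$ corresponding to $f_*(\pi_1(M))$. The paper's proof of the index bound is a single sentence gesturing at the covering factorization, whereas you have spelled out the infinite-index case and the multiplicativity of degree explicitly; this is added detail, not a different argument.
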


\begin{proof} The first claim follows since $f$ passes through the
covering map $\wt N \to N$ corresponding to the subgroup
$f_*(\pi_1(M))$ of $\pi_1(N)$. For the second claim, we have
\begin{equation*}
\begin{aligned}
f^!(f^*x)&=\Delta_N^{-1}f_*\Delta_M(f^*x)=\Delta_N^{-1}f_*((f^*x\cap
M)=\Delta_N^{-1}(x\cap f_*{M})\\
&=\deg f(\Delta_N^{-1}(x)\cap N=(\deg f) x
\end{aligned}
\end{equation*}
\end{proof}

\begin{thm} If $f:M \to N$ is a map of degree $1$ and $\pi_1(M)$ is
free then $\pi_1(N)$ is free.
\end{thm}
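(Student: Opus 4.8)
The plan is to show that $\cd\pi\le 1$, where $\pi=\pi_1(N)$; by the Stallings--Swan theorem \cite{Stal,Swan} this forces $\pi$ to be free. Write $F=\pi_1(M)$, which is free by hypothesis. The degree hypothesis enters through \propref{p:degree}: since $\deg f=1$ the map $f_*\colon F\to\pi$ is onto, and, crucially, $f^!f^*=\id$ on $H^*(N;A)$ for every local system $A$, so that $f^*\colon H^*(N;A)\to H^*(M;f^*A)$ is injective. The engine that converts a single cohomological equation into a dimension bound is \corref{c:svarz}: every class $a\in H^2(\pi;A)$ has the form $\mu_*(\ber_\pi^2)$ for some $\zp$-homomorphism $\mu\colon \ip^{\otimes2}\to A$. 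Hence, once I prove the single vanishing $\ber_\pi^2=0$ in $H^2(\pi;\ip^{\otimes2})$, it follows that $H^2(\pi;A)=0$ for every $\zp$-module $A$, whence $\cd\pi\le1$ and the theorem is finished.

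It remains to establish $\ber_\pi^2=0$. First I would pull this class back to the manifolds along the classifying maps. Let $c\colon N\to K(\pi,1)$ and $c_M\colon M\to K(F,1)$ induce the identity on fundamental groups, and let $\psi\colon K(F,1)\to K(\pi,1)$ realize $f_*\colon F\to\pi$. Since maps into an aspherical space are determined up to homotopy by their effect on $\pi_1$, the two maps $c\circ f$ and $\psi\circ c_M$ both induce $f_*$, so they are homotopic. Because $F$ is free, $K(F,1)$ has the homotopy type of a one-dimensional complex, and therefore $H^2(K(F,1);B)=0$ for every local system $B$. Consequently
\[
(c\circ f)^*(\ber_\pi^2)=c_M^*\,\psi^*(\ber_\pi^2)=0,
\]
since $\psi^*(\ber_\pi^2)$ already lives in the group $H^2(K(F,1);\psi^*\ip^{\otimes2})=0$.

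To conclude $\ber_\pi^2=0$ from $(c\circ f)^*(\ber_\pi^2)=0$ I need $(c\circ f)^*=f^*\circ c^*$ to be injective on $H^2(\pi;\ip^{\otimes2})$, and this factors as two injections. The map $f^*$ is injective by the degree-one transfer recorded above. For the inflation $c^*\colon H^2(\pi;A)\to H^2(N;A)$ I would invoke the standard fact that it is a monomorphism for any coefficient module: realize $K(\pi,1)$ by attaching to $N$ only cells of dimension $\ge 3$ (to kill $\pi_2,\pi_3,\dots$), so that the relative cochain complex of $(K(\pi,1),N)$ vanishes in degrees $\le 2$ and the long exact sequence of the pair yields the monomorphism $H^2(\pi;A)\hookrightarrow H^2(N;A)$. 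Composing the two, $(c\circ f)^*$ is injective, so $\ber_\pi^2=0$, which completes the argument.

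The main obstacle is precisely this passage from the manifold to the group: the factoring argument directly produces only $\ber_N^2=0$ on $N$, whereas the vanishing needed to feed \corref{c:svarz} is $\ber_\pi^2=0$ at the level of $\pi$. Bridging the two requires the two distinct injectivity statements to hold simultaneously---the topological one ($f^*$ injective, where closedness, orientability and $\deg f=1$ are all used through Poincar\'e duality in \propref{p:degree}) and the homotopy-theoretic one (injectivity of inflation on $H^2$). Checking that both inputs remain valid with the twisted coefficient system $\ip^{\otimes2}$, rather than merely with constant coefficients, is the point that must be handled with care.
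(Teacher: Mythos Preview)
Your argument is correct and uses the same ingredients as the paper's proof: Stallings--Swan, the universality of the Berstein class, injectivity of $f^*$ from the degree-$1$ transfer (\propref{p:degree}), and injectivity of inflation $c^*$ on $H^2$. The only difference is organizational: the paper argues by contradiction (if $\pi_1(N)$ is not free then $\ber_{\pi_1(N)}^2\ne 0$ by \theoref{t:power}, pushes this through $f^*$, and then invokes \corref{c:product} to deduce $\ber_M^2\ne 0$ and hence $\pi_1(M)$ not free), whereas you run the direct contrapositive, factoring through $K(F,1)$ to kill the class and citing \corref{c:svarz} to conclude $H^2(\pi;A)=0$ for all $A$. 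Your route slightly streamlines the coefficient-change step (you never need \corref{c:product}), but the two proofs are essentially the same.
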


\begin{proof} Let $\pi=\pi_1(M), \tau=\pi_1(N)$ and assume that $\tau$ is not
free. Then $\cd \tau>1$ according to Stallings~\cite{Stal} and
Swan~\cite{Swan}. So, $\ber_{\tau}^2\ne 0$ by \theoref{t:power}.
Hence $\ber_N^2\ne 0$, and so $f^*\ber_N^2\ne 0$ by
\propref{p:degree}. So, $\ber_M^2\ne 0$ by \corref{c:product}.
Hence, $\ber_{\pi}^2\ne 0$, and so $\cd \pi>1$. Thus, $\pi$ is not
free.
\end{proof}

\begin{remarks} 1. Note that $f_*;\pi_1(M) \to \pi_1(N)$ is an
epimorphism for any map of degree 1 because of \propref{p:degree}

2. Every epimorphism $h: F_r\to F_s$ of free finitely generated
groups can be realized as $h=f_*:\pi_1(M)\to \pi_1(N)$ for a map $f:
M \to N$ of degree 1. Indeed, take $M$ and $N$ to be the connected
sums of $r$ and $s$ copies of $S^1\times S^n$, respectively, and put
$f$ to be the map that shrinks $r-s$ copies of $S^1\times S^n$.
\end{remarks}

\end{document}